\newtheorem{theorem}{Theorem}
\newtheorem{corollary}[theorem]{Corollary}
\newtheorem{definition}[theorem]{Definition}
\newtheorem{lemma}[theorem]{Lemma}
\newtheorem{proposition}[theorem]{Proposition}
\newenvironment{proof}[1][Proof]{\noindent\textbf{#1.} }{\ \rule{0.5em}{0.5em}}
\begin{document}

\title{\textbf{Incomplete Tribonacci-Lucas numbers and polynomials}}
\author{\textbf{Nazmiye Yilmaz }and\textbf{\ Necati Taskara} \\
Department of Mathematics, Science Faculty,\\
Selcuk University, Campus, 42075, Konya, Turkey\medskip \\
\textit{nzyilmaz@selcuk.edu.tr} and\ \textit{ntaskara@selcuk.edu.tr}}
\maketitle

\begin{abstract}
In this paper, we define Tribonacci-Lucas polynomials and present
Tribonacci-Lucas numbers and polynomials as a binomial sum. Then, we
introduce incomplete Tribonacci-Lucas numbers and polynomials. In addition
we derive recurrence relations, some properties and generating functions of
these numbers and polynomials. Also, we find the generating function of
incomplete Tribonacci polynomials which is given as the open problem in [12].

\textit{Keywords:} Incomplete Tribonacci-Lucas numbers, Incomplete
Tribonacci-Lucas polynomials, Binomial sums, Generating functions.

\textit{AMS Classification: }11B39, 11B83, 05A15.
\end{abstract}

\section{Introduction}

\qquad Recently, Fibonacci and Lucas numbers have investigated very largely
and authors tried to developed and give some directions to mathematical
calculations using these type of special numbers [2,9,16,17]. One of these
directions goes through to the \textit{Tribonacci }and the \textit{%
Tribonacci-Lucas numbers. }In fact Tribonacci numbers have been firstly
defined by M. Feinberg in 1963 and then some important properties over this
numbers have been created by [5,8,10,13,19]. On the other hand,
Tribonacci-Lucas numbers have been introduced and investigated by author in
[4]. In addition, there exists another mathematical term, namely to be 
\textit{incomplete}, on Fibonacci, Lucas and Tribonacci numbers. As a brief
background, the incomplete Fibonacci, Lucas and Tribonacci numbers were
introduced by authours [6,11,12,18], and further the generating functions of
these numbers were presented by authours. Moreover, in [14,15], it is
defined and examined recurrence relations of the incomplete Fibonacci, Lucas 
$p$-numbers and $p$-polynomials . We may also refer [3,18] for further
studies about to have incompleteness of special numbers.

For $n\geq 2$, it is known that while the Tribonacci sequence $\left\{
T_{n}\right\} _{n\in 
\mathbb{N}
}$ is defined by 
\begin{equation}
T_{n+1}=T_{n}+T_{n-1}+T_{n-2}\ \ \ (T_{0}=0,\ T_{1}=T_{2}=1),  \label{1.1}
\end{equation}%
the Tribonacci-Lucas sequence $\left\{ K_{n}\right\} _{n\in 
\mathbb{N}
}$ is defined by

\begin{equation}
K_{n+1}=K_{n}+K_{n-1}+K_{n-2}\ \ (K_{0}=3,\ K_{1}=1,\ K_{2}=3).  \label{1.2}
\end{equation}%
There is also well known that each of the Tribonacci and Tribonaccci-Lucas
numbers is actually a linear combination of $\alpha ^{n}$, $\beta ^{n}$ and $%
\gamma ^{n}.$ In other words, 
\begin{equation}
\left. 
\begin{array}{c}
T_{n}=\frac{\alpha ^{n+1}}{(\alpha -\beta )(\alpha -\gamma )}+\frac{\beta
^{n+1}}{(\beta -\alpha )(\beta -\gamma )}+\frac{\gamma ^{n+1}}{(\gamma
-\alpha )(\gamma -\beta )} \\ 
\text{and} \\ 
K_{n}=\alpha ^{n}+\beta ^{n}+\gamma ^{n},%
\end{array}%
\right\}  \label{1.3}
\end{equation}%
where $\alpha ,\beta $ and $\gamma $ are roots of the characteristic
equations of (\ref{1.1}) and (\ref{1.2})\ such that 
\begin{eqnarray*}
\alpha &=&\frac{1+\sqrt[3]{19+3\sqrt{33}}+\sqrt[3]{19-3\sqrt{33}}}{3},\
\beta =\frac{1+w\sqrt[3]{19+3\sqrt{33}}+w^{2}\sqrt[3]{19-3\sqrt{33}}}{3}, \\
\ \gamma &=&\frac{1+w^{2}\sqrt[3]{19+3\sqrt{33}}+w\sqrt[3]{19-3\sqrt{33}}}{3}%
,\ \ w=\frac{-1+i\sqrt{3}}{2}.
\end{eqnarray*}%
Meanwhile we note that equations in (\ref{1.3}) are called the Binet
formulas for Tribonacci and Tribonacci-Lucas numbers, respectively.

Moreover, authors defined a large class \ of polynomials by Fibonacci and
Tribonacci numbers [1, 7]. Such polynomials are called Fibonacci polynomials
and Tribonacci polynomials \cite{9}, respectively. In 1973, Hoggatt and
Bicknell \cite{7} introduced Tribonacci polynomials. The Tribonacci
polynomials $T_{n}\left( x\right) $ are defined by the recurrence relation%
\begin{equation*}
T_{n+3}\left( x\right) =x^{2}T_{n+2}\left( x\right) +xT_{n+1}\left( x\right)
+T_{n}\ \left( x\right) ,
\end{equation*}%
where $T_{0}\left( x\right) =0,\ T_{1}\left( x\right) =1,\ T_{2}\left(
x\right) =x^{2}.$

On the other hand, in [12], incomplete Tribonacci polynomials are defined by%
\begin{equation}
T_{n}^{\left( s\right) }\left( x\right)
=\sum\limits_{i=0}^{s}\sum\limits_{j=0}^{i}\dbinom{i}{j}\dbinom{n-i-j-1}{i}%
x^{2n-3\left( i+j\right) -2},  \label{1.4}
\end{equation}%
where $0\leq s\leq \left\lfloor \frac{n-1}{2}\right\rfloor $. In here, for $%
x=1$, it is obtained incomplete Tribonacci numbers. The recurrence relation
of these polynomials is%
\begin{equation}
\begin{tabular}{ll}
$\ T_{n+3}^{\left( s\right) }\left( x\right) =$ & $x^{2}T_{n+2}^{\left(
s\right) }\left( x\right) +xT_{n+1}^{\left( s\right) }\left( x\right)
+T_{n}^{\left( s\right) }\left( x\right) -x\sum\limits_{j=0}^{s}\dbinom{s}{j}%
\dbinom{n-s-j}{s}x^{2n-3\left( s+j\right) }\ \ $ \\ 
& $\ -\sum\limits_{j=0}^{s}\dbinom{s}{j}\dbinom{n-s-j-1}{s}x^{2n-3\left(
s+j\right) -2}$%
\end{tabular}
\label{1.5}
\end{equation}%
$\ \ $

Also the generating function of incomplete Tribonacci numbers is%
\begin{equation}
Q_{s}\left( z\right) =\frac{T_{2s+1}+z\left( T_{2s+2}-T_{2s+1}\right)
+z^{2}\left( T_{2s+3}-T_{2s+2}-T_{2s+1}-2\right) -g\left( z\right) }{%
1-z-z^{2}-z^{3}},  \label{1.6}
\end{equation}%
where $g\left( z\right) =\left( z^{2}+z^{3}\right) \frac{\left( 1+z\right)
^{s}}{\left( 1-z\right) ^{s+1}}$ and $T_{n}$ is $n$-th Tribonacci number.

In the light of the above paragraph, the main goal of this paper is to
improve the Tribonacci-Lucas numbers\ with a different viewpoint . In order
to do that we first define Tribonacci-Lucas polynomials and then by
presenting Tribonacci-Lucas numbers and polynomials as a binomial sum\textit{%
, }we define\textit{\ }the incomplete Tribonacci-Lucas numbers and
polynomials.

After that we find the generating function of incomplete Tribonacci
polynomials which is given as the open problem in [12]. Also, we obtain some
properties and generating functions of incomplete Tribonacci-Lucas numbers
and polynomials.

\section{Tribonacci-Lucas polynomials and pascal-like triangle}

\qquad In the following table, we give the pascal-like triangle of
Tribonacci-Lucas numbers and each element of this table is defined in
similar way as in the tribonacci triangle.

\begin{eqnarray*}
&&%
\begin{tabular}[t]{l|lllllll}
$n\backslash i$ & $0$ & $1$ & $2$ & $3$ & $4$ & $5$ & $\cdots $ \\ \hline
$0$ & $3$ &  &  &  &  &  &  \\ 
$1$ & $1$ & $2$ &  &  &  &  &  \\ 
$2$ & $1$ & $6$ & $2$ &  &  &  &  \\ 
$3$ & $1$ & $8$ & $10$ & $2$ &  &  &  \\ 
$4$ & $1$ & $10$ & $24$ & $14$ & $2$ &  &  \\ 
$5$ & $1$ & $12$ & $42$ & $48$ & $18$ & $2$ &  \\ 
$\vdots $ &  &  &  & $\vdots $ &  &  & 
\end{tabular}
\\
&&\text{\textit{Table1}. Tribonacci-Lucas triangle}
\end{eqnarray*}

\qquad Let $B\left( n,i\right) $ be the element in the $n$-th row and $i$-th
column of the Tribonacci-Lucas triangle. By using the triangle, we have

\begin{equation}
B\left( n+1,i\right) =B\left( n,i\right) +B\left( n,i-1\right) +B\left(
n-1,i-1\right) ,  \label{2.1}
\end{equation}%
where $B\left( n,0\right) =1,\ B\left( n,n\right) =2$ for $n\in 
\mathbb{Z}
^{+}.$

By using the Table 1, we have the Tribonacci-Lucas numbers as binomial sum 
\begin{equation*}
K_{n}=\sum\limits_{i=0}^{\left\lfloor \frac{n}{2}\right\rfloor }B\left(
n-i,i\right) .
\end{equation*}%
In here, the sum of elements on the rising diagonal lines in the
Tribonacci-Lucas triangle is the Tribonacci-Lucas number $K_{n}$.
Furthermore, we write 
\begin{equation}
K_{n}=\sum\limits_{i=0}^{\left\lfloor \frac{n}{2}\right\rfloor
}\sum\limits_{j=0}^{i}\frac{n}{n-i-j}\dbinom{i}{j}\dbinom{n-i-j}{i},\ \ \ (\
n>i+j)  \label{2.2}
\end{equation}%
since these coefficients hold the relation 
\begin{equation*}
\left\{ 
\begin{array}{c}
B\left( n,i\right) =\sum\limits_{j=0}^{i}\frac{n+i}{n-j}\binom{i}{j}\binom{%
n-j}{i},\ \ \ (n>i) \\ 
B(n,i)=2,\ \ \ \ \ \ \ \ \ \ \ \ \ \ \ \ \ (n=i)%
\end{array}%
\right. .
\end{equation*}%
\qquad

Now, we introduce the Tribonacci-Lucas polynomial%
\begin{equation*}
K_{n+3}\left( x\right) =x^{2}K_{n+2}\left( x\right) +xK_{n+1}\left( x\right)
+K_{n}\ \left( x\right) ,\ \ 
\end{equation*}%
where $K_{0}\left( x\right) =3,\ K_{1}\left( x\right) =x^{2},\ K_{2}\left(
x\right) =x^{4}+2x.$ Note that $K_{n}\left( 1\right) =K_{n},$ $n\in 
\mathbb{N}
$. \ It is given a few Tribonacci-Lucas polynomials in the following:%
\begin{equation*}
\begin{tabular}{ll}
$K_{0}\left( x\right) =3,$ & $K_{4}\left( x\right) =x^{8}+4x^{5}+6x^{2},$ \\ 
$K_{1}\left( x\right) =x^{2},$ & $K_{5}\left( x\right)
=x^{10}+5x^{7}+10x^{4}+5x,$ \\ 
$K_{2}\left( x\right) =x^{4}+2x,$ & $K_{6}\left( x\right)
=x^{12}+6x^{9}+15x^{6}+14x^{3}+3,$ \\ 
$K_{3}\left( x\right) =x^{6}+3x^{3}+3,$ & $K_{7}\left( x\right)
=x^{14}+7x^{11}+21x^{8}+28x^{5}+14x^{2}.$%
\end{tabular}%
\end{equation*}%
\qquad \qquad \qquad\ \ \qquad

In similarly with the Tribonacci-Lucas triangle, we define the
Tribonacci-Lucas polynomials triangle:%
\begin{eqnarray*}
&&%
\begin{tabular}[t]{l|lllllll}
$n\backslash i$ & $0$ & $1$ & $2$ & $3$ & $4$ & $5$ & $\cdots $ \\ \hline
$0$ & $3$ &  &  &  &  &  &  \\ 
$1$ & $x^{2}$ & $2x$ &  &  &  &  &  \\ 
$2$ & $x^{4}$ & $3x^{3}+3$ & $2x^{2}$ &  &  &  &  \\ 
$3$ & $x^{6}$ & $4x^{5}+4x^{2}$ & $5x^{4}+5x$ & $2x^{3}$ &  &  &  \\ 
$4$ & $x^{8}$ & $5x^{7}+5x^{4}$ & $9x^{6}+12x^{3}+3$ & $7x^{5}+7x^{2}$ & $%
2x^{4}$ &  &  \\ 
$5$ & $x^{10}$ & $6x^{9}+6x^{6}$ & $14x^{8}+21x^{5}+7x^{2}$ & $%
16x^{7}+24x^{4}+8x$ & $9x^{6}+9x^{3}$ & $2x^{5}$ &  \\ 
$\vdots $ &  &  &  & $\vdots $ &  &  & 
\end{tabular}
\\
&&\text{\textit{Table2}. Tribonacci-Lucas polynomials triangle}
\end{eqnarray*}%
\qquad

Let $B\left( n,i\right) \left( x\right) $ be the element in the $n$-th row
and $i$-th column of the Tribonacci-Lucas polynomials triangle. By using the
triangle, we have

\begin{equation}
B\left( n+1,i\right) \left( x\right) =x^{2}B\left( n,i\right) \left(
x\right) +xB\left( n,i-1\right) \left( x\right) +B\left( n-1,i-1\right)
\left( x\right) ,  \label{2.3}
\end{equation}%
where $B\left( n,0\right) \left( x\right) =x^{2n},\ B\left( n,n\right)
\left( x\right) =2x^{n}$ for $n\in 
\mathbb{Z}
^{+}.$

By using the Table 2, we have 
\begin{equation*}
K_{n}\left( x\right) =\sum\limits_{i=0}^{\left\lfloor \frac{n}{2}%
\right\rfloor }B\left( n-i,i\right) \left( x\right) .
\end{equation*}%
In here, the sum of elements on the rising diagonal lines in the Table 2 is
the Tribonacci-Lucas polynomials $K_{n}\left( x\right) .$Furthermore, we
write 
\begin{equation}
K_{n}\left( x\right) =\sum\limits_{i=0}^{\left\lfloor \frac{n}{2}%
\right\rfloor }\sum\limits_{j=0}^{i}\frac{n}{n-i-j}\dbinom{i}{j}\dbinom{n-i-j%
}{i}x^{2n-3i-3j},\ \ \ \ \ (n>i+j)  \label{2.4}
\end{equation}%
since these coefficients satisfy the relation%
\begin{equation*}
\left\{ 
\begin{array}{c}
B\left( n,i\right) \left( x\right) =\sum\limits_{j=0}^{i}\frac{n+i}{n-j}%
\binom{i}{j}\binom{n-j}{i}x^{2n-i-3j},\ \ \ \ \ \ (n>i) \\ 
B(n,i)(x)=2x^{n},\ \ \ \ \ \ \ \ \ \ \ \ \ \ \ \ \ \ \ \ \ \ \ \ \ \ \ \ \ \
(n=i)%
\end{array}%
\right. .
\end{equation*}%
\qquad

Thus, by considering equations (\ref{2.2}) and (\ref{2.4}), we introduce the
incomplete Tribonacci-Lucas numbers and incomplete Tribonacci-Lucas
polynomials.

Now, we get new recurrence relations, some properties and generating
functions of incomplete Tribonacci-Lucas numbers and polynomials.

\section{The incomplete Tribonacci-Lucas polynomials and Tribonacci-Lucas
numbers}

\begin{definition}
The incomplete Tribonacci-Lucas polynomials $K_{n}^{\left( s\right) }\left(
x\right) $ are defined by%
\begin{eqnarray}
K_{n}^{\left( s\right) }\left( x\right) &=&\sum\limits_{i=0}^{s}B\left(
n-i,i\right) \left( x\right)  \label{3.1} \\
&=&\sum\limits_{i=0}^{s}\sum\limits_{j=0}^{i}\frac{n}{n-i-j}\dbinom{i}{j}%
\dbinom{n-i-j}{i}x^{2n-3\left( i+j\right) },  \notag
\end{eqnarray}%
where $0\leq s\leq \left\lfloor \frac{n}{2}\right\rfloor $ for $n\in 
\mathbb{Z}
^{+}.$
\end{definition}

In Definition 1, for $x=1$, we define the incomplete Tribonacci-Lucas
numbers, that is, $K_{n}^{\left( s\right) }\left( 1\right) =K_{n}\left(
s\right) .$

To reveal the importance of this subject, we can express the relationships
as in the following:

\begin{itemize}
\item $K_{n}^{\left( \left\lfloor \frac{n}{2}\right\rfloor \right) }\left(
x\right) =K_{n}\left( x\right) $ $\ $(the relationship between incomplete
Tribonacci-Lucas polynomials and Tribonacci-Lucas polynomials),

\item $K_{n}\left( \left\lfloor \frac{n}{2}\right\rfloor \right) =K_{n}$
(the relationship between incomplete Tribonacci-Lucas numbers and
Tribonacci-Lucas numbers).
\end{itemize}

From Definition 1, we have a few incomplete Tribonacci-Lucas polynomials as

\begin{eqnarray*}
&&%
\begin{tabular}[t]{l|lllll}
$n\backslash s$ & $0$ & $1$ & $2$ & $3$ & $\cdots $ \\ \hline
$1$ & $x^{2}$ &  &  &  &  \\ 
$2$ & $x^{4}$ & $x^{4}+2x$ &  &  &  \\ 
$3$ & $x^{6}$ & $x^{6}+3x^{3}+3$ &  &  &  \\ 
$4$ & $x^{8}$ & $x^{8}+4x^{5}+4x^{2}$ & $x^{8}+4x^{5}+6x^{2}$ &  &  \\ 
$5$ & $x^{10}$ & $x^{10}+5x^{7}+5x^{4}$ & $x^{10}+5x^{7}+10x^{4}+5x$ &  & 
\\ 
$6$ & $x^{12}$ & $x^{12}+6x^{9}+6x^{6}$ & $x^{12}+6x^{9}+15x^{6}+12x^{3}+3$
& $x^{12}+6x^{9}+15x^{6}+14x^{3}+3$ &  \\ 
$\vdots $ &  &  & $\vdots $ &  & 
\end{tabular}
\\
&&\text{\textit{Table3}. Incomplete Tribonacci-Lucas polynomials }
\end{eqnarray*}

By taking account of Table 3, we can write

\begin{equation}
K_{n}^{\left( 0\right) }\left( x\right) =x^{2n},  \label{3.3}
\end{equation}%
\begin{equation}
K_{n}^{\left( 1\right) }\left( x\right) =x^{2n}+nx^{2n-3}+nx^{2n-6},\ \
n\geq 3  \label{3.4}
\end{equation}

\begin{equation}
K_{n}^{\left( \left\lfloor \frac{n}{2}\right\rfloor \right) }\left( x\right)
=K_{n}\left( x\right) ,  \label{3.5}
\end{equation}

\begin{equation}
K_{n}^{\left( \left\lfloor \frac{n-2}{2}\right\rfloor \right) }\left(
x\right) =\left\{ 
\begin{array}{c}
\ \ \ K_{n}\left( x\right) -2x^{\frac{n}{2}},\ \ \ \ \ \ \ \ \ \ \ \ \ \ \ \
\ \ \ \ \ \ \ \ \ \ n\geq 2,\ even \\ 
K_{n}\left( x\right) -\left( nx^{\frac{n+3}{2}}+nx^{\frac{n-3}{2}}\right) ,\
\ \ \ \ \ \ \ n\geq 2,\ odd%
\end{array}%
.\right.  \label{3.6}
\end{equation}

\begin{proposition}
For $\ 0\leq s\leq \left\lfloor \frac{n}{2}\right\rfloor $ and $n\in 
\mathbb{Z}
^{+}$, we have the following recurrence relations;
\end{proposition}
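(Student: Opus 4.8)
The plan is to prove the recurrence directly from the definition $K_{n}^{(s)}(x)=\sum_{i=0}^{s}B(n-i,i)(x)$ in (\ref{3.1}) together with the triangle relation (\ref{2.3}), treating the coefficients $B(n,i)(x)$ as the basic objects rather than expanding the double binomial sum right away. First I would write
\[
K_{n+3}^{(s)}(x)=\sum_{i=0}^{s}B(n+3-i,i)(x)
\]
and apply (\ref{2.3}) to each summand in the form $B(n+3-i,i)(x)=x^{2}B(n+2-i,i)(x)+xB(n+2-i,i-1)(x)+B(n+1-i,i-1)(x)$. Adopting the convention $B(\cdot,-1)(x)=0$, which is consistent with (\ref{2.3}) at $i=0$ since there it reduces to the boundary value $B(m,0)(x)=x^{2m}$, I would split the result into three sums.

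The key step is the reindexing. The first sum is already $x^{2}\sum_{i=0}^{s}B(n+2-i,i)(x)=x^{2}K_{n+2}^{(s)}(x)$, while in the second and third sums the substitution $i\mapsto i-1$ turns them into $x\sum_{i=0}^{s-1}B(n+1-i,i)(x)=xK_{n+1}^{(s-1)}(x)$ and $\sum_{i=0}^{s-1}B(n-i,i)(x)=K_{n}^{(s-1)}(x)$ respectively, the $i=0$ contributions dropping out precisely because of the convention above. This yields the mixed-index recurrence
\[
K_{n+3}^{(s)}(x)=x^{2}K_{n+2}^{(s)}(x)+xK_{n+1}^{(s-1)}(x)+K_{n}^{(s-1)}(x),
\]
equivalently $K_{n+3}^{(s+1)}(x)=x^{2}K_{n+2}^{(s+1)}(x)+xK_{n+1}^{(s)}(x)+K_{n}^{(s)}(x)$, which is the clean analogue of the recurrence defining $K_{n}(x)$ itself.

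To obtain the genuine same-index recurrence (the counterpart of (\ref{1.5})), I would remove the shifted superscripts using the one-term telescoping identity $K_{m}^{(s)}(x)-K_{m}^{(s-1)}(x)=B(m-s,s)(x)$, which is immediate from the definition. Substituting $K_{n+1}^{(s-1)}(x)=K_{n+1}^{(s)}(x)-B(n+1-s,s)(x)$ and $K_{n}^{(s-1)}(x)=K_{n}^{(s)}(x)-B(n-s,s)(x)$ gives
\[
K_{n+3}^{(s)}(x)=x^{2}K_{n+2}^{(s)}(x)+xK_{n+1}^{(s)}(x)+K_{n}^{(s)}(x)-xB(n+1-s,s)(x)-B(n-s,s)(x),
\]
after which inserting the closed form $B(n,i)(x)=\sum_{j=0}^{i}\frac{n+i}{n-j}\binom{i}{j}\binom{n-j}{i}x^{2n-i-3j}$ into the two correction terms and simplifying the exponents produces the fully explicit right-hand side.

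I expect the main obstacle to be purely bookkeeping: checking that the boundary conventions legitimize the reindexing at both ends ($i=0$ and $i=s$), and then matching the powers of $x$ when the closed form for $B$ is substituted into the correction terms, where the leading factor of $x$ multiplying $B(n+1-s,s)(x)$ must be absorbed correctly. I would verify the entire chain on a small case, for instance $n=3,\ s=1$ against Table 3, where $K_{6}^{(1)}(x)=x^{12}+6x^{9}+6x^{6}$, to confirm both the sign and the shape of the correction terms before stating the general result.
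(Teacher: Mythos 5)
Your proposal is correct and follows essentially the same route as the paper: apply the triangle relation (\ref{2.3}) termwise to the definition (\ref{3.1}), reindex the two shifted sums with the convention $B(\cdot,-1)(x)=0$ (the paper's $\binom{n}{-1}=0$) to obtain the homogeneous recurrence (\ref{3.7}), and then pass to the non-homogeneous form (\ref{3.8}) by peeling off the top summand. Your explicit telescoping identity $K_{m}^{(s)}(x)-K_{m}^{(s-1)}(x)=B(m-s,s)(x)$ is just a cleaner phrasing of the paper's step of dropping the $i=s+1$ term from both sides, and your numerical check against Table 3 is consistent with the stated result.
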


\begin{enumerate}
\item[i)] \textit{The homogeneous recurrence relation of the incomplete
Tribonacci-Lucas polynomials }$K_{n}^{\left( s\right) }\left( x\right) $%
\textit{\ is}%
\begin{equation}
K_{n+3}^{\left( s+1\right) }\left( x\right) =x^{2}K_{n+2}^{\left( s+1\right)
}\left( x\right) +xK_{n+1}^{\left( s\right) }\left( x\right) +K_{n}^{\left(
s\right) }\left( x\right) .  \label{3.7}
\end{equation}

\item[ii)] \textit{The non-homogeneous recurrence relation of the incomplete
Tribonacci-Lucas polynomials }$K_{n}^{\left( s\right) }\left( x\right) $%
\textit{\ is}%
\begin{eqnarray}
K_{n+3}^{\left( s\right) }\left( x\right) &=&x^{2}K_{n+2}^{\left( s\right)
}\left( x\right) +xK_{n+1}^{\left( s\right) }\left( x\right) +K_{n}^{\left(
s\right) }\left( x\right)  \label{3.8} \\
&&-xB\left( n+1-s,s\right) \left( x\right) -B\left( n-s,s\right) \left(
x\right) .  \notag
\end{eqnarray}
\end{enumerate}

\begin{proof}

\begin{description}
\item[i)] From Definition 1, let us label $x^{2}K_{n+2}^{\left( s+1\right)
}\left( x\right) +xK_{n+1}^{\left( s\right) }\left( x\right) +K_{n}^{\left(
s\right) }\left( x\right) $ by $RHS$. Actually we write%
\begin{eqnarray*}
RHS &=&x^{2}\sum\limits_{i=0}^{s+1}B\left( n+2-i,i\right) \left( x\right)
+x\sum\limits_{i=0}^{s}B\left( n+1-i,i\right) \left( x\right)
+\sum\limits_{i=0}^{s}B\left( n-i,i\right) \left( x\right) \\
&=&x^{2}\sum\limits_{i=0}^{s+1}B\left( n+2-i,i\right) \left( x\right)
+x\sum\limits_{i=1}^{s+1}B\left( n+2-i,i-1\right) \left( x\right)
+\sum\limits_{i=1}^{s+1}B\left( n+1-i,i-1\right) \left( x\right) \\
&=&\sum\limits_{i=0}^{s+1}\left( x^{2}B\left( n+2-i,i\right) \left( x\right)
+xB\left( n+2-i,i-1\right) \left( x\right) +B\left( n+1-i,i-1\right) \left(
x\right) \right) \\
&&-xB\left( n+2,-1\right) \left( x\right) -B\left( n+1,-1\right) \left(
x\right) .
\end{eqnarray*}%
Then, by considering $\dbinom{n}{-1}=0$ and the equation (\ref{2.3}), we
finally have%
\begin{equation*}
RHS=\sum\limits_{i=0}^{s+1}B\left( n+3-i,i\right) \left( x\right)
=K_{n+3}^{\left( s+1\right) }\left( x\right)
\end{equation*}%
as required.

\item[ii)] By considering the equations (\ref{2.3}), (\ref{3.7}) and
Definition 1, we have%
\begin{eqnarray*}
\sum\limits_{i=0}^{s+1}B\left( n+3-i,i\right) \left( x\right)
&=&x^{2}\sum\limits_{i=0}^{s+1}B\left( n+2-i,i\right) \left( x\right)
+x\sum\limits_{i=0}^{s}B\left( n+1-i,i\right) \left( x\right)
+\sum\limits_{i=0}^{s}B\left( n-i,i\right) \left( x\right) \\
\sum\limits_{i=0}^{s}B\left( n+3-i,i\right) \left( x\right)
&=&x^{2}\sum\limits_{i=0}^{s}B\left( n+2-i,i\right) \left( x\right)
+x\sum\limits_{i=0}^{s}B\left( n+1-i,i\right) \left( x\right)
+\sum\limits_{i=0}^{s}B\left( n-i,i\right) \left( x\right) \\
&&-B\left( n+2-s,s+1\right) +x^{2}B\left( n+1-s,s+1\right) \\
K_{n+3}^{\left( s\right) }\left( x\right) &=&x^{2}K_{n+2}^{\left( s\right)
}\left( x\right) +xK_{n+1}^{\left( s\right) }\left( x\right) +K_{n}^{\left(
s\right) }\left( x\right) -xB\left( n+1-s,s\right) -B\left( n-s,s\right) .
\end{eqnarray*}
\end{description}
\end{proof}

By using Table 3, for $x=1$, we have incomplete Tribonacci-Lucas numbers in
the following Table 4:%
\begin{eqnarray*}
&&%
\begin{tabular}[t]{l|lllll}
$n\backslash s$ & $0$ & $1$ & $2$ & $3$ & $\cdots $ \\ \hline
$1$ & $1$ &  &  &  &  \\ 
$2$ & $1$ & $3$ &  &  &  \\ 
$3$ & $1$ & $7$ &  &  &  \\ 
$4$ & $1$ & $9$ & $11$ &  &  \\ 
$5$ & $1$ & $11$ & $21$ &  &  \\ 
$6$ & $1$ & $13$ & $37$ & $39$ &  \\ 
$\vdots $ &  &  & $\vdots $ &  & 
\end{tabular}
\\
&&\text{\textit{Table4}. Incomplete Tribonacci-Lucas numbers }
\end{eqnarray*}

\begin{corollary}
For $\ 0\leq s\leq \left\lfloor \frac{n}{2}\right\rfloor $ and $n\in 
\mathbb{Z}
^{+},$ we have the following recurrence relations;
\end{corollary}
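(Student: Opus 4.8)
The plan is to obtain the Corollary as the specialization of Proposition 2 at $x=1$, using the identity $K_{n}^{\left( s\right) }\left( 1\right) =K_{n}\left( s\right) $ recorded just after Definition 1. Concretely, I expect the two asserted relations to be the homogeneous recurrence
\[
K_{n+3}\left( s+1\right) =K_{n+2}\left( s+1\right) +K_{n+1}\left( s\right) +K_{n}\left( s\right)
\]
and the non-homogeneous recurrence
\[
K_{n+3}\left( s\right) =K_{n+2}\left( s\right) +K_{n+1}\left( s\right) +K_{n}\left( s\right) -B\left( n+1-s,s\right) -B\left( n-s,s\right) ,
\]
that is, exactly (\ref{3.7}) and (\ref{3.8}) with every power of $x$ set equal to $1$.

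The first step is the auxiliary fact that $B\left( n,i\right) \left( 1\right) =B\left( n,i\right) $, i.e. evaluating the Tribonacci-Lucas polynomial triangle at $x=1$ returns the Tribonacci-Lucas number triangle. I would verify this by induction on $n$: setting $x=1$ in the polynomial recurrence (\ref{2.3}) gives precisely the numerical recurrence (\ref{2.1}), while the boundary entries agree since $B\left( n,0\right) \left( 1\right) =1^{2n}=1=B\left( n,0\right) $ and $B\left( n,n\right) \left( 1\right) =2\cdot 1^{n}=2=B\left( n,n\right) $. Hence the two triangles coincide at $x=1$ in every admissible position $0\le i\le n$.

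With this in hand, the second step is routine substitution. Evaluating the homogeneous relation (\ref{3.7}) at $x=1$ and replacing each $K_{m}^{\left( t\right) }\left( 1\right) $ by $K_{m}\left( t\right) $ yields the homogeneous recurrence for the incomplete Tribonacci-Lucas numbers. Likewise, evaluating the non-homogeneous relation (\ref{3.8}) at $x=1$ and using $B\left( n+1-s,s\right) \left( 1\right) =B\left( n+1-s,s\right) $ and $B\left( n-s,s\right) \left( 1\right) =B\left( n-s,s\right) $ for the two correction terms gives the non-homogeneous recurrence. Since both identities of Proposition 2 hold as polynomial identities in $x$, their specialization at $x=1$ is immediate.

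There is essentially no substantive obstacle here, because the statement is a corollary of an already proved proposition; the only point needing care is the harmless bookkeeping at the boundary, namely checking that the constraint $0\le s\le \left\lfloor \frac{n}{2}\right\rfloor $ together with the index shifts keep the entries $B\left( n+1-s,s\right) $ and $B\left( n-s,s\right) $ inside the triangle, so that the evaluation $B(\cdot ,\cdot )(1)=B(\cdot ,\cdot )$ applies termwise. I would close by noting that this is precisely why the Corollary for numbers is deduced from Proposition 2 rather than reproved from scratch.
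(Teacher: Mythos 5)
Your proposal is correct and matches the paper's (implicit) approach: the paper states this Corollary without proof, treating it as the immediate $x=1$ specialization of Proposition 2 via $K_{n}^{\left( s\right) }\left( 1\right) =K_{n}\left( s\right)$, which is exactly what you do. Your extra check that $B\left( n,i\right) \left( 1\right) =B\left( n,i\right)$ by comparing the recurrences (\ref{2.1}) and (\ref{2.3}) and their boundary values is a harmless and welcome addition, not a departure.
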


\begin{enumerate}
\item[i)] \textit{The homogeneous recurrence relation of the incomplete
Tribonacci-Lucas numbers }$K_{n}\left( s\right) $\textit{\ is}%
\begin{equation}
K_{n+3}\left( s+1\right) =K_{n+2}\left( s+1\right) +K_{n+1}\left( s\right)
+K_{n}\left( s\right) .  \label{3.9}
\end{equation}

\item[ii)] \textit{The non-homogeneous recurrence relation of the incomplete
Tribonacci-Lucas numbers }$K_{n}\left( s\right) $\textit{\ is}%
\begin{eqnarray}
K_{n+3}\left( s\right) &=&K_{n+2}\left( s\right) +K_{n+1}\left( s\right)
+K_{n}\left( s\right)  \label{3.10} \\
&&-B\left( n+1-s,s\right) -B\left( n-s,s\right) .  \notag
\end{eqnarray}
\end{enumerate}

\begin{proposition}
The relation between of incomplete Tribonacci polynomials $T_{n}^{\left(
s\right) }\left( x\right) $ and incomplete Tribonacci-Lucas polynomials $%
K_{n}^{\left( s\right) }\left( x\right) $ is%
\begin{equation*}
K_{n}^{\left( s\right) }\left( x\right) =T_{n+1}^{\left( s\right) }\left(
x\right) +xT_{n-1}^{\left( s-1\right) }\left( x\right) +2T_{n-2}^{\left(
s-1\right) }\left( x\right) ,
\end{equation*}%
where $1\leq s\leq \left\lfloor \frac{n-1}{2}\right\rfloor $ and $n>2.$
\end{proposition}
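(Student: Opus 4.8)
The natural strategy is to reduce the claimed identity to the underlying binomial-sum definitions and verify it term by term, since both sides are explicit double sums. First I would recall the defining formula $K_{n}^{(s)}(x)=\sum_{i=0}^{s}\sum_{j=0}^{i}\frac{n}{n-i-j}\binom{i}{j}\binom{n-i-j}{i}x^{2n-3(i+j)}$ from Definition~1, together with the incomplete Tribonacci polynomial formula $T_{n}^{(s)}(x)=\sum_{i=0}^{s}\sum_{j=0}^{i}\binom{i}{j}\binom{n-i-j-1}{i}x^{2n-3(i+j)-2}$ from~(\ref{1.4}). The goal is then to show that, after substituting these into the right-hand side $T_{n+1}^{(s)}(x)+xT_{n-1}^{(s-1)}(x)+2T_{n-2}^{(s-1)}(x)$, the coefficient of each monomial $x^{2n-3(i+j)}$ collapses to the coefficient $\frac{n}{n-i-j}\binom{i}{j}\binom{n-i-j}{i}$ appearing in $K_{n}^{(s)}(x)$.

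The key computational step is to normalize the exponents so that the three $T$-terms contribute to the same power of $x$. Writing $T_{n+1}^{(s)}(x)$ with summation indices $(i,j)$ gives exponent $2(n+1)-3(i+j)-2=2n-3(i+j)$, which already matches the $K$-exponent; the term $xT_{n-1}^{(s-1)}(x)$ gives $1+2(n-1)-3(i+j)-2=2n-3(i+j)-3$, and $2T_{n-2}^{(s-1)}(x)$ gives $2(n-2)-3(i+j)-2=2n-3(i+j)-6$. To align the latter two with the monomial $x^{2n-3(i+j)}$ I would reindex by shifting $(i,j)\mapsto(i',j')$ so that all three binomial products land on a common power; this reindexing, together with the Pascal-type identity $\binom{m}{i}=\binom{m-1}{i}+\binom{m-1}{i-1}$ used to split the $T_{n+1}$ binomial $\binom{n-i-j}{i}$, is what generates the extra pieces that the lower-order $T$-terms must supply.

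An alternative and probably cleaner route is to avoid the raw sums entirely and argue through the triangle coefficients. Since $K_{n}^{(s)}(x)=\sum_{i=0}^{s}B(n-i,i)(x)$ by Definition~1, it suffices to establish the analogous identity at the level of the Pascal-like entries, namely to express each $B(n-i,i)(x)$ as a combination of the Tribonacci-triangle entries that build $T_{n+1}^{(s)}$, $T_{n-1}^{(s-1)}$ and $T_{n-2}^{(s-1)}$. This reduces the proposition to the complete (non-incomplete) identity $K_{n}(x)=T_{n+1}(x)+xT_{n-1}(x)+2T_{n-2}(x)$, which is the Tribonacci-Lucas analogue of the classical Lucas relation $L_{n}=F_{n+1}+F_{n-1}$ and can be verified directly from the Binet-type forms in~(\ref{1.3}) or from the common recurrence; one then checks that truncating each side at the appropriate index $s$ preserves the identity, precisely because the incomplete versions are partial sums over the same rising diagonals.

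The main obstacle I anticipate is bookkeeping of the truncation indices rather than any deep algebra: the three $T$-terms carry superscripts $s$, $s-1$, $s-1$ and base indices $n+1$, $n-1$, $n-2$, so their rising-diagonal sums cut off at different places, and one must confirm that the boundary terms dropped by the shorter sums $T_{n-1}^{(s-1)}$ and $T_{n-2}^{(s-1)}$ match exactly the terms beyond index $s$ that $K_{n}^{(s)}$ also omits. Verifying that these truncations are compatible — i.e.\ that the partial-sum structure respects the recurrence~(\ref{2.3}) relating the $B$-entries — is where the argument must be handled carefully, especially at the endpoints $i=s$ and near $i=0$, and this is why the hypotheses $1\leq s\leq\lfloor\frac{n-1}{2}\rfloor$ and $n>2$ are imposed.
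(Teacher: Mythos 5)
Your first route---substituting the binomial-sum formulas from Definition~1 and (\ref{1.4}) into the right-hand side and matching coefficients after reindexing---is essentially the paper's approach, whose entire proof is the single remark that the claim follows ``by using Definition 1 and the equation (\ref{1.4})'', so your sketch is already more explicit than the original. The computation does close as you anticipate: after aligning the exponents, the identity reduces diagonal by diagonal to $\tfrac{n}{n-i-j}\binom{n-i-j}{i}-\binom{n-i-j}{i}=\tfrac{i+j}{i}\binom{n-i-j-1}{i-1}$ together with $\tfrac{i+j}{i}\binom{i}{j}=\binom{i-1}{j}+2\binom{i-1}{j-1}$, both elementary absorption identities.
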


\begin{proof}
Proof of its can easily do by using Definition 1 and the equation (\ref{1.4}%
).
\end{proof}

\begin{corollary}
The relation between of incomplete Tribonacci numbers $T_{n}\left( s\right) $
and incomplete Tribonacci-Lucas numbers $K_{n}\left( s\right) $ is%
\begin{equation*}
K_{n}\left( s\right) =T_{n+1}\left( s\right) +T_{n-1}\left( s-1\right)
+2T_{n-2}\left( s-1\right) ,
\end{equation*}%
where $1\leq s\leq \left\lfloor \frac{n-1}{2}\right\rfloor $ and $n>2.$
\end{corollary}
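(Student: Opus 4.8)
The plan is to obtain this corollary as the numerical specialization of Proposition 6, rather than re-deriving it from scratch. Proposition 6 supplies the polynomial identity $K_n^{(s)}(x)=T_{n+1}^{(s)}(x)+xT_{n-1}^{(s-1)}(x)+2T_{n-2}^{(s-1)}(x)$, valid for $1\le s\le\lfloor\frac{n-1}{2}\rfloor$ and $n>2$. Since every quantity appearing there is a polynomial in $x$, I would simply evaluate both sides at $x=1$.

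The key step is to invoke the two normalization facts already recorded in the paper: $K_n^{(s)}(1)=K_n(s)$, which is the definition of the incomplete Tribonacci-Lucas numbers stated immediately after Definition 1, and $T_n^{(s)}(1)=T_n(s)$, the specialization of the incomplete Tribonacci polynomials from (\ref{1.4}) noted in the introduction. Substituting $x=1$ turns the left-hand side into $K_n(s)$, collapses the middle coefficient $x$ into $1$, and converts each $T^{(\cdot)}_{\cdot}(1)$ into the corresponding incomplete Tribonacci number, giving $K_n(s)=T_{n+1}(s)+T_{n-1}(s-1)+2T_{n-2}(s-1)$ exactly. The admissibility range $1\le s\le\lfloor\frac{n-1}{2}\rfloor$, $n>2$, is inherited verbatim from Proposition 6, since the evaluation $x=1$ does not affect the index constraints. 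All of the genuine content therefore sits in Proposition 6, and at the level of this corollary there is essentially no obstacle.

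As a cross-check, and as a self-contained alternative route, I would verify the identity directly from the binomial-sum forms. At $x=1$ the incomplete Tribonacci-Lucas numbers are $K_n(s)=\sum_{i=0}^{s}\sum_{j=0}^{i}\frac{n}{n-i-j}\binom{i}{j}\binom{n-i-j}{i}$ from (\ref{3.1}), while each incomplete Tribonacci number is $T_m(s')=\sum_{i=0}^{s'}\sum_{j=0}^{i}\binom{i}{j}\binom{m-i-j-1}{i}$ from (\ref{1.4}). Matching the two sides index-by-index would require reconciling the weight $\frac{n}{n-i-j}$ on the Tribonacci-Lucas side with three unweighted, shift-indexed Tribonacci sums on the right. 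If I pursued this route, the main obstacle would be exactly that bookkeeping: confirming that the Pascal-type combination $T_{n+1}(s)+T_{n-1}(s-1)+2T_{n-2}(s-1)$ reproduces the factor $\frac{n}{n-i-j}$ termwise, while correctly handling the boundary terms generated by the shift $s\mapsto s-1$ in the second and third summands. Because the specialization route bypasses all of this, I would present it as the proof and relegate the direct verification to a remark.
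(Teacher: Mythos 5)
Your proof is correct and matches the paper's (implicit) argument exactly: the corollary is obtained from the polynomial relation of the preceding proposition (which you cite as Proposition 6; in the paper's numbering it is Proposition 4) by evaluating at $x=1$ and using $K_n^{(s)}(1)=K_n(s)$ and $T_n^{(s)}(1)=T_n(s)$. The paper offers no separate proof for the corollary, so your specialization route is precisely what is intended.
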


\begin{theorem}
For $n,\ h\geq 1$ and $0\leq s\leq \left\lfloor \frac{n}{2}\right\rfloor ,$
the sum of incomplete Tribonacci-Lucas numbers is%
\begin{equation}
\sum_{i=0}^{h-1}K_{n+i}\left( s\right) =\frac{1}{2}\left( K_{n+h+2}\left(
s+1\right) -K_{n+2}\left( s+1\right) +K_{n}\left( s\right) -K_{n+h}\left(
s\right) \right) .  \label{3.11}
\end{equation}
\end{theorem}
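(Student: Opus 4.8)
The plan is to reduce the whole identity to a single telescoping application of the homogeneous recurrence (\ref{3.9}). Replacing $n$ by a running index $m$ and rearranging (\ref{3.9}) gives the key identity
\begin{equation*}
K_{m}\left( s\right) +K_{m+1}\left( s\right) =K_{m+3}\left( s+1\right) -K_{m+2}\left( s+1\right) .
\end{equation*}
The point of this rewriting is that its right-hand side is a difference of two consecutive terms of the sequence $\left\{ K_{m+2}\left( s+1\right) \right\} _{m}$, so summing over a block of consecutive $m$ makes the right side telescope, while the left side, being a sum of overlapping consecutive pairs, will reassemble twice the target sum up to two boundary corrections. The factor $\frac{1}{2}$ in the statement is exactly the signature of this ``sum of consecutive pairs'' mechanism.

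Concretely, I would sum the displayed identity over $m=n,n+1,\dots ,n+h-1$. On the right this telescopes to
\begin{equation*}
\sum_{i=0}^{h-1}\left( K_{n+i+3}\left( s+1\right) -K_{n+i+2}\left( s+1\right) \right) =K_{n+h+2}\left( s+1\right) -K_{n+2}\left( s+1\right) ,
\end{equation*}
which already supplies the first two terms of the claimed formula. On the left, writing $S=\sum_{i=0}^{h-1}K_{n+i}\left( s\right) $, the two shifted sums $\sum_{i=0}^{h-1}K_{n+i}\left( s\right) $ and $\sum_{i=0}^{h-1}K_{n+i+1}\left( s\right) =\sum_{i=1}^{h}K_{n+i}\left( s\right) $ share all their interior terms, so together they equal $2S-K_{n}\left( s\right) +K_{n+h}\left( s\right) $. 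Equating the two evaluations and solving for $S$ gives
\begin{equation*}
S=\frac{1}{2}\left( K_{n+h+2}\left( s+1\right) -K_{n+2}\left( s+1\right) +K_{n}\left( s\right) -K_{n+h}\left( s\right) \right) ,
\end{equation*}
which is precisely (\ref{3.11}).

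Since the argument is a single use of (\ref{3.9}) followed by a telescoping sum and one reindexing, I do not expect any deep obstacle; the only genuine care is the bookkeeping of the two opposite index shifts on the left-hand side and checking that every invocation of (\ref{3.9}) stays within the admissible range $0\leq s+1\leq \left\lfloor \frac{m+3}{2}\right\rfloor $ as $m$ ranges over the block $n,\dots ,n+h-1$, so that $s$ is never pushed past its allowed bound. If one prefers to avoid relying on a clean rearrangement of (\ref{3.9}), an equally short alternative is induction on $h$: the base case $h=1$ follows directly from (\ref{3.9}) after the same rearrangement, and the step from $h$ to $h+1$ reduces, after cancellation, to one further application of (\ref{3.9}) with $n$ shifted to $n+h$. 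I would present the telescoping route as the main proof, since it is the most transparent.
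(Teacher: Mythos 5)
Your proposal is correct. The paper proves (\ref{3.11}) by induction on $h$: the base case $h=1$ is verified directly from (\ref{3.9}), and the inductive step adds $K_{n+h}\left( s\right) $ to both sides and then invokes (\ref{3.9}) once more (with $n$ shifted to $n+h$) to convert $+K_{n+h}\left( s\right) $ into $K_{n+h+3}\left( s+1\right) -K_{n+h+2}\left( s+1\right) -K_{n+h+1}\left( s\right) $ -- this is exactly the ``alternative'' you sketch in your last paragraph. Your main route instead rearranges (\ref{3.9}) into $K_{m}\left( s\right) +K_{m+1}\left( s\right) =K_{m+3}\left( s+1\right) -K_{m+2}\left( s+1\right) $ and sums over $m=n,\dots ,n+h-1$, letting the right side telescope and the overlapping pairs on the left reassemble into $2S-K_{n}\left( s\right) +K_{n+h}\left( s\right) $; the index bookkeeping there checks out. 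The two arguments use identical ingredients (nothing beyond (\ref{3.9})) and are really the rolled and unrolled versions of the same computation, but your telescoping presentation has the advantage of \emph{deriving} the closed form -- in particular explaining where the factor $\frac{1}{2}$ and the four boundary terms come from -- whereas the induction must be handed the formula in advance to verify it. You are also slightly more careful than the paper in flagging that each invocation of (\ref{3.9}) must stay within the admissible range of $s$.
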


\begin{proof}
Let us use the principle of mathematical induction on $h$ to prove (\ref%
{3.11}). While, for $h=1$, it is easy to see that%
\begin{equation*}
K_{n}\left( s\right) =\frac{1}{2}\left( K_{n+3}\left( s+1\right)
-K_{n+2}\left( s+1\right) +K_{n}\left( s\right) -K_{n+1}\left( s\right)
\right) .
\end{equation*}%
As the usual next step of inductions, let us assume that it is true for all
positive integers $h.$ That is,%
\begin{equation*}
\sum_{i=0}^{h-1}K_{n+i}\left( s\right) =\frac{1}{2}\left( K_{n+h+2}\left(
s+1\right) -K_{n+2}\left( s+1\right) +K_{n}\left( s\right) -K_{n+h}\left(
s\right) \right) .
\end{equation*}%
Therefore, we have to show that it is true for $h+1.$ In other words$,$ we
need to check%
\begin{equation*}
\sum_{i=0}^{h}K_{n+i}\left( s\right) =\frac{1}{2}\left( K_{n+h+3}\left(
s+1\right) -K_{n+2}\left( s+1\right) +K_{n}\left( s\right) -K_{n+h+1}\left(
s\right) \right) .
\end{equation*}%
Hence, we can write%
\begin{eqnarray*}
\sum_{i=0}^{h}K_{n+i}\left( s\right) &=&\sum_{i=0}^{h-1}K_{n+i}\left(
s\right) +K_{n+h}\left( s\right) \\
&=&\frac{1}{2}\left( K_{n+h+2}\left( s+1\right) -K_{n+2}\left( s+1\right)
+K_{n}\left( s\right) -K_{n+h}\left( s\right) \right) +K_{n+h}\left( s\right)
\\
&=&\frac{1}{2}\left( K_{n+h+2}\left( s+1\right) -K_{n+2}\left( s+1\right)
+K_{n}\left( s\right) +K_{n+h}\left( s\right) \right) \\
&=&\frac{1}{2}\left( K_{n+h+3}\left( s+1\right) -K_{n+2}\left( s+1\right)
+K_{n}\left( s\right) -K_{n+h+1}\left( s\right) \right) .
\end{eqnarray*}
\end{proof}

The following proposition give the sum of incomplete Tribonacci-Lucas
poynomials, that is, sum of the $n$-th row of the Table 3.

\begin{proposition}
For $l=\left\lfloor \frac{n}{2}\right\rfloor ,$ we have the equality%
\begin{equation}
\sum_{s=0}^{l}K_{n}^{\left( s\right) }\left( x\right) =\left( l+1\right)
K_{n}\left( x\right) -\sum\limits_{i=0}^{l}\sum\limits_{j=0}^{i}\frac{in}{%
n-i-j}\dbinom{i}{j}\dbinom{n-i-j}{i}x^{2n-3\left( i+j\right) }.  \label{3.12}
\end{equation}
\end{proposition}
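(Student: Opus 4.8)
The plan is to exploit the additive structure of the incomplete polynomials supplied by Definition 1, namely $K_{n}^{(s)}(x)=\sum_{i=0}^{s}B(n-i,i)(x)$, and to collapse the double sum over $s$ into a single weighted sum over $i$ by interchanging the order of summation. No induction is needed; the identity follows from a multiplicity count together with one substitution of the explicit coefficient formula.

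First I would substitute the definition into the left-hand side, obtaining
\[
\sum_{s=0}^{l}K_{n}^{(s)}(x)=\sum_{s=0}^{l}\sum_{i=0}^{s}B(n-i,i)(x).
\]
The key combinatorial step is to swap the two summations over the triangular index region $\{(s,i):0\le i\le s\le l\}$. For each fixed $i$ with $0\le i\le l$, the term $B(n-i,i)(x)$ is included precisely in those $K_{n}^{(s)}(x)$ with $s\ge i$, i.e.\ for $s=i,i+1,\dots,l$; hence it appears exactly $l-i+1$ times, yielding
\[
\sum_{s=0}^{l}\sum_{i=0}^{s}B(n-i,i)(x)=\sum_{i=0}^{l}(l-i+1)\,B(n-i,i)(x).
\]

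Next I would split the weight as $l-i+1=(l+1)-i$ and invoke the binomial-sum representation of $K_{n}(x)$ recorded just before Definition 1, namely $K_{n}(x)=\sum_{i=0}^{l}B(n-i,i)(x)$ with $l=\lfloor n/2\rfloor$. This gives
\[
\sum_{i=0}^{l}(l-i+1)\,B(n-i,i)(x)=(l+1)K_{n}(x)-\sum_{i=0}^{l}i\,B(n-i,i)(x),
\]
which already has the shape of the claimed right-hand side. To finish, I would expand $B(n-i,i)(x)$ explicitly: setting $n\mapsto n-i$ in the closed form $B(n,i)(x)=\sum_{j=0}^{i}\frac{n+i}{n-j}\binom{i}{j}\binom{n-j}{i}x^{2n-i-3j}$ gives $B(n-i,i)(x)=\sum_{j=0}^{i}\frac{n}{n-i-j}\binom{i}{j}\binom{n-i-j}{i}x^{2n-3(i+j)}$, so multiplying by $i$ and summing over $i$ reproduces exactly the double sum $\sum_{i=0}^{l}\sum_{j=0}^{i}\frac{in}{n-i-j}\binom{i}{j}\binom{n-i-j}{i}x^{2n-3(i+j)}$ appearing in (\ref{3.12}).

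The step demanding the most care is the interchange of summation together with the multiplicity count: one must verify that $\{(s,i):0\le i\le s\le l\}$ is faithfully re-described as $\{(s,i):0\le i\le l,\ i\le s\le l\}$, so that each $B(n-i,i)(x)$ is weighted by the correct multiplicity $l-i+1$ and no boundary term is dropped. Once this bookkeeping is settled, the remainder is a direct substitution of the explicit coefficient formula, with no further analytic input required.
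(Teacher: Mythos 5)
Your proposal is correct and follows essentially the same route as the paper: the paper likewise substitutes Definition 1, regroups the triangular double sum so that $B(n-i,i)(x)$ carries weight $l+1-i$, splits this as $(l+1)-i$, identifies $(l+1)K_{n}^{(l)}(x)=(l+1)K_{n}(x)$, and substitutes the explicit coefficient formula for the weighted remainder. The only cosmetic difference is that the paper displays the multiplicity count term by term rather than phrasing it as an interchange of summation.
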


\begin{proof}
From Definition 1, we have%
\begin{equation*}
K_{n}^{\left( s\right) }\left( x\right) =\sum\limits_{i=0}^{s}B\left(
n-i,i\right) \left( x\right) .
\end{equation*}%
Then, we can write%
\begin{eqnarray*}
\sum_{s=0}^{l}K_{n}^{\left( s\right) }\left( x\right)
&=&\sum\limits_{s=0}^{l}\sum\limits_{i=0}^{s}B\left( n-i,i\right) \left(
x\right) \\
&=&\left( l+1\right) B\left( n,0\right) \left( x\right) +lB\left(
n-1,1\right) \left( x\right) +\cdots +B\left( n-l,l\right) \left( x\right) \\
&=&\sum\limits_{i=0}^{l}\left( l+1-i\right) B\left( n-i,i\right) \left(
x\right) \\
&=&\sum\limits_{i=0}^{l}\left( l+1\right) B\left( n-i,i\right) \left(
x\right) -\sum\limits_{i=0}^{l}iB\left( n-i,i\right) \left( x\right) \\
&=&\left( l+1\right) K_{n}^{\left( l\right) }\left( x\right)
-\sum\limits_{i=0}^{l}\sum\limits_{j=0}^{i}\frac{in}{n-i-j}\dbinom{i}{j}%
\dbinom{n-i-j}{i}x^{2n-3\left( i+j\right) }.
\end{eqnarray*}%
Since $l$ is $\left\lfloor \frac{n}{2}\right\rfloor ,$ we obtain%
\begin{equation*}
\sum_{s=0}^{l}K_{n}^{\left( s\right) }\left( x\right) =\left( l+1\right)
K_{n}\left( x\right) -\sum\limits_{i=0}^{l}\sum\limits_{j=0}^{i}\frac{in}{%
n-i-j}\dbinom{i}{j}\dbinom{n-i-j}{i}x^{2n-3\left( i+j\right) }.
\end{equation*}
\end{proof}

The following corollary shows the sum of the $n$-th row of the Table 4. It
is obtained from (\ref{3.12}) with $x=1.$

\begin{corollary}
For $l=\left\lfloor \frac{n}{2}\right\rfloor ,$ we have the following
equality;%
\begin{equation}
\sum_{s=0}^{l}K_{n}\left( s\right) =\left( l+1\right)
K_{n}-\sum\limits_{i=0}^{l}\sum\limits_{j=0}^{i}\frac{in}{n-i-j}\dbinom{i}{j}%
\dbinom{n-i-j}{i}.  \label{3.13}
\end{equation}
\end{corollary}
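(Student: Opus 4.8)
The plan is to obtain the identity as a direct specialization of Proposition 8: equation (\ref{3.13}) is nothing more than (\ref{3.12}) evaluated at $x=1$, so the whole task reduces to justifying that this substitution is legitimate and tracking what each piece becomes.

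First I would recall the two facts that convert the polynomial statement into the numerical one. By the remark immediately following Definition 1, the incomplete Tribonacci-Lucas numbers are defined precisely as $K_n^{\left( s\right) }\left( 1\right) =K_{n}\left( s\right) $, valid for every admissible index $0\leq s\leq \left\lfloor \frac{n}{2}\right\rfloor $; and from Section 2 we have $K_{n}\left( 1\right) =K_{n}$. These let me replace every occurrence of an incomplete Tribonacci-Lucas polynomial (resp. the full polynomial) at $x=1$ by the corresponding number.

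Next I would set $x=1$ on both sides of (\ref{3.12}). On the left-hand side, linearity of the (finite) sum over $s$ gives $\sum_{s=0}^{l}K_{n}^{\left( s\right) }\left( 1\right) =\sum_{s=0}^{l}K_{n}\left( s\right) $. On the right-hand side, the first term becomes $\left( l+1\right) K_{n}\left( 1\right) =\left( l+1\right) K_{n}$, while in the double sum each monomial factor $x^{2n-3\left( i+j\right) }$ collapses to $1$, leaving exactly $\sum_{i=0}^{l}\sum_{j=0}^{i}\frac{in}{n-i-j}\binom{i}{j}\binom{n-i-j}{i}$. Assembling these yields (\ref{3.13}).

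I do not expect any real obstacle here, since the argument is a termwise evaluation of a finite identity. The only point meriting a moment's care is that the substitution $x=1$ is valid uniformly in the summation indices, i.e.\ that no denominator $n-i-j$ vanishes; but Definition 1 already imposes $n>i+j$ throughout the relevant range, so each coefficient $\frac{in}{n-i-j}\binom{i}{j}\binom{n-i-j}{i}$ is a well-defined number and the passage from the polynomial identity to the numerical one is immediate.
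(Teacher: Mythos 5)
Your proposal is correct and follows exactly the route the paper takes: the corollary is obtained by setting $x=1$ in equation (\ref{3.12}), using $K_{n}^{\left( s\right) }\left( 1\right) =K_{n}\left( s\right) $ and $K_{n}\left( 1\right) =K_{n}$. Your additional remark that the condition $n>i+j$ keeps every coefficient well defined is a sensible (if routine) bit of extra care beyond what the paper states.
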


\section{Generating functions of the incomplete Tribonacci and
Tribonacci-Lucas polynomials and numbers}

\begin{lemma}
Let $\left\{ S_{n}\right\} _{n=0}^{\infty }$ be a complex sequence
satisfying the non-homogeneous third-order recurrence relation. Then we have%
\begin{equation*}
S_{n}=aS_{n-1}+bS_{n-2}+cS_{n-3}+r_{n},
\end{equation*}%
where $a,b,c\in 
\mathbb{C}
,~n\geq 3~$and $r_{n}:%
\mathbb{N}
\rightarrow 
\mathbb{C}
$ is a sequence. Hence the generating function $U\left( x\right) $ of $S_{n}$
is given by%
\begin{equation}
U\left( x\right) =\frac{S_{0}-r_{0}+x\left( S_{1}-aS_{0}-r_{1}\right)
+x^{2}\left( S_{2}-aS_{1}-bS_{0}-r_{2}\right) +G\left( x\right) }{%
1-ax-bx^{2}-cx^{3}},  \label{4.1}
\end{equation}%
where $G\left( x\right) $ denotes the generating function of $r_{n}$.
\end{lemma}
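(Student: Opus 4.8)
The plan is to establish the generating function identity by the standard method of forming the product of $U(x)$ with the characteristic polynomial $1-ax-bx^2-cx^3$ and collecting coefficients. I would start by writing $U(x)=\sum_{n=0}^{\infty}S_n x^n$ and $G(x)=\sum_{n=0}^{\infty}r_n x^n$, and then compute the product
\begin{equation*}
\left(1-ax-bx^2-cx^3\right)U(x)=\sum_{n=0}^{\infty}S_n x^n-a\sum_{n=0}^{\infty}S_n x^{n+1}-b\sum_{n=0}^{\infty}S_n x^{n+2}-c\sum_{n=0}^{\infty}S_n x^{n+3}.
\end{equation*}
After reindexing each sum so that every term carries $x^n$, the coefficient of $x^n$ for $n\geq 3$ becomes $S_n-aS_{n-1}-bS_{n-2}-cS_{n-3}$, which by the hypothesized recurrence equals exactly $r_n$.

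**Next I would** handle the low-order terms $n=0,1,2$ separately, since the recurrence is only assumed to hold for $n\geq 3$ and the shifted sums do not all contribute at these indices. Collecting coefficients gives the constant term $S_0$, the $x^1$ coefficient $S_1-aS_0$, and the $x^2$ coefficient $S_2-aS_1-bS_0$. The key observation is that the remaining factor $\sum_{n\geq 3}r_n x^n$ equals $G(x)-r_0-r_1 x-r_2 x^2$, so that
\begin{equation*}
\left(1-ax-bx^2-cx^3\right)U(x)=S_0+\left(S_1-aS_0\right)x+\left(S_2-aS_1-bS_0\right)x^2+G(x)-r_0-r_1x-r_2x^2.
\end{equation*}
Regrouping the numerator into $\left(S_0-r_0\right)+x\left(S_1-aS_0-r_1\right)+x^2\left(S_2-aS_1-bS_0-r_2\right)+G(x)$ and dividing through by $1-ax-bx^2-cx^3$ yields precisely the claimed formula \eqref{4.1}.

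**The main obstacle** I expect is purely bookkeeping: tracking the reindexing of the three shifted sums correctly at the boundary indices $n=0,1,2$, where some of the shifted series have not yet ``switched on.'' In particular one must verify that the $-a\,x\sum S_n x^n$ term contributes to the $x^1$ coefficient but the $b$ and $c$ terms do not, and analogously at $x^2$. A clean way to manage this is to write out all four series explicitly up through the $x^2$ term before passing to the general coefficient, so that the isolated low-order corrections $-r_0$, $-r_1 x$, $-r_2 x^2$ emerge transparently from the definition of $G(x)$. There is no deep step here; the entire argument is a formal power-series manipulation, and convergence is not an issue since we work with $U$ and $G$ as formal generating functions.
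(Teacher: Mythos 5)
Your proposal is correct and follows essentially the same route as the paper: both multiply $U(x)$ by $1-ax-bx^{2}-cx^{3}$ (equivalently, subtract the shifted series $axU(x)$, $bx^{2}U(x)$, $cx^{3}U(x)$ together with $G(x)$ from $U(x)$), invoke the recurrence for $n\geq 3$, and isolate the low-order corrections $r_{0}$, $r_{1}x$, $r_{2}x^{2}$. Your treatment of the boundary indices is if anything slightly more explicit than the paper's, but there is no substantive difference in method.
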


\begin{proof}
Let $U(x)$ and $G(x)$ be two generating functions for complex sequences $%
S_{n}$ and $r_{n}$, respectively, where%
\begin{equation}
U(x)=S_{0}+S_{1}x+S_{2}x^{2}+S_{3}x^{3}+\ldots +S_{n}x^{n}+\ldots ,
\label{4.2}
\end{equation}%
\begin{equation}
G\left( x\right) =r_{0}+r_{1}x+r_{2}x^{2}+r_{3}x^{3}+\ldots
+r_{n}x^{n}+\ldots .  \label{4.3}
\end{equation}%
If $U(x)$ given in (\ref{4.2}) multiply with $ax,$ $bx^{2}$ and $cx^{3},$
respectively, then we get%
\begin{equation}
\left. 
\begin{array}{c}
axU\left( x\right) =aS_{0}x+aS_{1}x^{2}+aS_{2}x^{3}+aS_{3}x^{4}+\ldots
+aS_{n}x^{n+1}+\ldots \\ 
bx^{2}U\left( x\right)
=bS_{0}x^{2}+bS_{1}x^{3}+bS_{2}x^{4}+bS_{3}x^{5}+\ldots +bS_{n}x^{n+2}+\ldots
\\ 
cx^{3}U\left( x\right)
=cS_{0}x^{3}+cS_{1}x^{4}+cS_{2}x^{5}+cS_{3}x^{6}+\ldots +cS_{n}x^{n+3}+\ldots%
\end{array}%
\right\} .  \label{4.4}
\end{equation}%
Consequently, by subtracting (\ref{4.3}) and (\ref{4.4}) from (\ref{4.2}),
it is obtained the equation%
\begin{equation*}
U\left( x\right) =\frac{S_{0}-r_{0}+x\left( S_{1}-aS_{0}-r_{1}\right)
+x^{2}\left( S_{2}-aS_{1}-bS_{0}-r_{2}\right) +G\left( x\right) }{%
1-ax-bx^{2}-cx^{3}}
\end{equation*}%
which completes the proof of the Lemma.\medskip
\end{proof}

Now, we examine the problem which is given for incomplete Tribonacci
polynomials in [15] .

\begin{theorem}
The generating function of the incomplete Tribonacci polynomials $%
T_{n}^{\left( s\right) }\left( x\right) $ is given by%
\begin{equation*}
Q_{s}\left( x,z\right) =\sum_{i=0}^{\infty }T_{i}^{\left( s\right) }\left(
x\right) z^{i}=z^{2s+1}U_{s}\left( x,z\right) ,
\end{equation*}%
where $U_{s}\left( x,z\right) =\frac{T_{2s+1}\left( x\right) +z\left(
T_{2s+2}\left( x\right) -x^{2}T_{2s+1}\left( x\right) \right) +z^{2}\left(
T_{2s}\left( x\right) -2x^{s+1}\right) -\left( xz^{2}+z^{3}\right) \frac{%
\left( x+z\right) ^{s}}{\left( 1-x^{2}z\right) ^{s+1}}}{\left(
1-x^{2}z-xz^{2}-z^{3}\right) }$.
\end{theorem}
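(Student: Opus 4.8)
The plan is to apply the Lemma to the sequence of incomplete Tribonacci polynomials with $s$ held fixed. First I would record that $T_i^{(s)}(x)$ is defined only when $s\le\lfloor\frac{i-1}{2}\rfloor$, i.e. for $i\ge 2s+1$; hence the power series $\sum_{i\ge0}T_i^{(s)}(x)z^i$ carries no terms below degree $2s+1$, which is exactly the factor $z^{2s+1}$ in the statement. Setting $S_m:=T_{m+2s+1}^{(s)}(x)$ for $m\ge0$, the claim reduces to showing that $U_s(x,z)=\sum_{m\ge0}S_mz^m$ equals the displayed expression.

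Next I would convert the non-homogeneous recurrence (\ref{1.5}) into a recurrence for $S_m$. Substituting $n\mapsto m+2s-2$ there gives, for $m\ge3$, the relation $S_m=x^2S_{m-1}+xS_{m-2}+S_{m-3}+r_m$, where $r_m=-x\sum_{j=0}^s\binom{s}{j}\binom{m+s-2-j}{s}x^{2m+s-4-3j}-\sum_{j=0}^s\binom{s}{j}\binom{m+s-3-j}{s}x^{2m+s-6-3j}$. This is precisely the hypothesis of the Lemma with $a=x^2$, $b=x$, $c=1$, so the denominator $1-x^2z-xz^2-z^3$ is produced automatically and only the numerator remains.

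The heart of the proof is the closed evaluation of $G(z)=\sum_{m\ge0}r_mz^m$. I would interchange the $m$- and $j$-summations; in each inner sum the binomial $\binom{m+s-2-j}{s}$ (respectively $\binom{m+s-3-j}{s}$) vanishes until $m=2+j$ (respectively $m=3+j$), so after the shift $q=m-2-j$ (respectively $q=m-3-j$) the negative-binomial identity $\sum_{q\ge0}\binom{q+s}{s}w^q=(1-w)^{-(s+1)}$ with $w=x^2z$ applies. The residual $j$-sum then collapses through the binomial theorem $\sum_{j=0}^s\binom{s}{j}x^{s-j}z^j=(x+z)^s$, and the two pieces combine to $G(z)=-(xz^2+z^3)\frac{(x+z)^s}{(1-x^2z)^{s+1}}$, matching the correction term in $U_s$.

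Finally I would compute the three seeds. Because $2s+1$ and $2s+2$ both have maximal incomplete index $s$, one has $S_0=T_{2s+1}^{(s)}(x)=T_{2s+1}(x)$ and $S_1=T_{2s+2}^{(s)}(x)=T_{2s+2}(x)$, whereas $2s+3$ has maximal index $s+1$, so $S_2=T_{2s+3}^{(s)}(x)=T_{2s+3}(x)-x^{s+1}$, the $x^{s+1}$ being the omitted $i=s+1$ diagonal term. Substituting $S_0,S_1,S_2$ together with $r_0=r_1=0$ and $r_2=-x^{s+1}$ into the numerator of (\ref{4.1}) and reducing with the Tribonacci recurrence $T_{2s+3}(x)=x^2T_{2s+2}(x)+xT_{2s+1}(x)+T_{2s}(x)$ yields the asserted numerator. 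I expect the main obstacle to be exactly this bookkeeping of the low-order data: the additive constant sitting in the $z^2$ slot depends delicately on the simultaneous values of $r_2$, of the omitted top term of $S_2$, and of the $z^2$-coefficient of $G(z)$, which partly cancel. To pin down that constant with confidence I would first test $s=0$, where $S_m=x^{2m}$ and $U_0(x,z)=(1-x^2z)^{-1}$, and only then trust the general simplification.
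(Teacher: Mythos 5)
Your strategy is exactly the paper's: shift to $S_m=T_{m+2s+1}^{(s)}(x)$, invoke the Lemma with $a=x^2$, $b=x$, $c=1$, sum the correction terms via the negative-binomial series and the binomial theorem to get $G(z)=-(xz^2+z^3)(x+z)^s/(1-x^2z)^{-(s+1)}\cdot(1-x^2z)^{0}$ — that is, $G(z)=-(xz^2+z^3)\frac{(x+z)^s}{(1-x^2z)^{s+1}}$ — and assemble the numerator from the seeds. Your signs for $r_m$, $r_2$ and $G$ are the internally consistent ones (the paper flips the sign of $r_n$ relative to its own Lemma and compensates only partially). The gap is your final sentence: substituting your own data into (\ref{4.1}) does \emph{not} yield the asserted numerator. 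Since $r_0=r_1=0$ and $[z^2]G=r_2=-x^{s+1}$, the terms $-r_2z^2$ and the $z^2$-part of $G$ cancel, so the numerator is $S_0+z\left(S_1-x^2S_0\right)+z^2\left(S_2-x^2S_1-xS_0\right)+\sum_{m\geq 3}r_mz^m$. Its $z^2$-coefficient is $\left(T_{2s+3}(x)-x^{s+1}\right)-x^2T_{2s+2}(x)-xT_{2s+1}(x)+x^{s+1}=T_{2s}(x)$ (the $+x^{s+1}$ coming from rewriting $\sum_{m\geq3}r_mz^m$ as $G(z)+x^{s+1}z^2$). So a correct execution of your plan produces $z^2T_{2s}(x)$ where the statement has $z^2\left(T_{2s}(x)-2x^{s+1}\right)$.

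Your own proposed $s=0$ test settles which is right: $U_0(x,z)=\sum_m x^{2m}z^m=\frac{1}{1-x^2z}$, and indeed $1-\frac{xz^2+z^3}{1-x^2z}=\frac{1-x^2z-xz^2-z^3}{1-x^2z}$ gives exactly that after dividing by the denominator, whereas the printed numerator $1-2xz^2-\frac{xz^2+z^3}{1-x^2z}$ does not. The spurious $-2x^{s+1}$ arises in the paper's proof from double-charging the omitted top term of $S_2$: it subtracts $r_2=+x^{s+1}$ (the wrong sign relative to its Lemma) on top of the $-x^{s+1}$ already contained in $S_2=T_{2s+3}(x)-x^{s+1}$, while the compensating $z^2$-term of $G$ is moved to the other side of the equation. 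In short, your method is sound and identical to the paper's, but it proves the corrected identity with $z^2T_{2s}(x)$; as a proof of the statement as printed, the last step fails, and the sanity check you wisely proposed would have told you so had you carried it out.
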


\begin{proof}
Let $s$ be fixed positive integer. By using the equations (\ref{1.4}) and (%
\ref{1.5}), we have%
\begin{eqnarray*}
T_{n}^{\left( s\right) }\left( x\right) &=&0\ \ \ \ \ \ \left( 0\leq
n<2s+1\right) , \\
T_{2s+1}^{\left( s\right) }\left( x\right) &=&T_{2s+1}\left( x\right) , \\
T_{2s+2}^{\left( s\right) }\left( x\right) &=&T_{2s+2}\left( x\right) , \\
T_{2s+3}^{\left( s\right) }\left( x\right) &=&T_{2s+3}\left( x\right)
-x^{s+1},
\end{eqnarray*}%
and 
\begin{eqnarray}
T_{n}^{\left( s\right) }\left( x\right) &=&x^{2}T_{n-1}^{\left( s\right)
}\left( x\right) +xT_{n-2}^{\left( s\right) }\left( x\right)
+T_{n-3}^{\left( s\right) }\left( x\right) -\sum\limits_{j=0}^{s}\dbinom{s}{j%
}\dbinom{n-3-s-j}{s}x^{2n-5-3\left( s+j\right) }  \label{4.5} \\
&&-\sum\limits_{j=0}^{s}\dbinom{s}{j}\dbinom{n-4-s-j}{s}x^{2n-8-3\left(
s+j\right) },  \notag
\end{eqnarray}%
where $\ n\geq 4+2s.$ Also, we replace $S_{0},S_{1},...,S_{n}$ by $%
T_{2s+1}^{\left( s\right) }\left( x\right) ,~T_{2s+2}^{\left( s\right)
}\left( x\right) ,...,T_{n+2s+1}^{\left( s\right) }\left( x\right) $,
respectively. Assume that $r_{0}=r_{1}=0,\ r_{2}=x^{s+1}$ and 
\begin{equation*}
r_{n}=\sum\limits_{j=0}^{s}\dbinom{s}{j}\dbinom{n-2+s-j}{s}%
x^{2n-3+s-3j}+\sum\limits_{j=0}^{s}\dbinom{s}{j}\dbinom{n-3+s-j}{s}%
x^{2n-6+s-3j}.
\end{equation*}%
Furthermore, by considering [20, page 127]$,$ the generating function $%
G\left( x,z\right) $ of the $\left\{ r_{n}\right\} \ $is 
\begin{equation*}
G\left( x,z\right) =\left( xz^{2}+z^{3}\right) \frac{\left( x+z\right) ^{s}}{%
\left( 1-x^{2}z\right) ^{s+1}}.
\end{equation*}%
Therefore, by using Lemma 9, the generating function $U_{s}\left( x,z\right) 
$ of the sequence $S_{n}$ is 
\begin{eqnarray*}
U_{s}\left( x,z\right) \left( 1-x^{2}z-xz^{2}-z^{3}\right) +G\left(
x,z\right) &=&T_{2s+1}\left( x\right) +z\left( T_{2s+2}\left( x\right)
-x^{2}T_{2s+1}\left( x\right) \right) \\
&&+z^{2}\left( T_{2s}\left( x\right) -2x^{s+1}\right) .
\end{eqnarray*}%
Eventually, we conclude that $Q_{s}\left( x,z\right) =z^{2s+1}U_{s}\left(
x,z\right) $ as required.
\end{proof}

In the above theorem, if we take $x=1$, it is obtained the generating
function of the Tribonacci numbers in [12].

\begin{corollary}
The generating function of the Tribonacci numbers $T_{n}\left( s\right) $ is
given by%
\begin{equation*}
Q_{s}\left( z\right) =\sum_{i=0}^{\infty }T_{i}\left( s\right)
z^{i}=z^{2s+1}U_{s}\left( z\right) ,
\end{equation*}%
where $U_{s}\left( z\right) =\frac{T_{2s+1}+z\left( T_{2s+2}-T_{2s+1}\right)
+z^{2}\left( T_{2s}-2\right) -\left( z^{2}+z^{3}\right) \frac{\left(
1+z\right) ^{s}}{\left( 1-z\right) ^{s+1}}}{\left( 1-z-z^{2}-z^{3}\right) }$.
\end{corollary}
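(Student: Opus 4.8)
The plan is to obtain this corollary as the direct specialization of the preceding theorem at $x=1$, so no fresh machinery is required. First I would record the defining evaluation of the incomplete Tribonacci polynomials: setting $x=1$ in (\ref{1.4}) collapses every monomial $x^{2n-3(i+j)-2}$ to $1$, so that $T_n^{(s)}(1)=T_n(s)$ for all admissible $n$ and $s$ (including the boundary range $0\le n<2s+1$, where both sides vanish). Consequently the two generating functions coincide after a single evaluation,
\begin{equation*}
Q_s(1,z)=\sum_{i=0}^{\infty}T_i^{(s)}(1)\,z^i=\sum_{i=0}^{\infty}T_i(s)\,z^i=Q_s(z).
\end{equation*}

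Next I would substitute $x=1$ into the closed form of $U_s(x,z)$ furnished by the preceding theorem, term by term. Since $T_n(1)=T_n$ is the $n$-th Tribonacci number, each polynomial value reduces to its integer counterpart, giving $T_{2s+1}(1)=T_{2s+1}$, $T_{2s+2}(1)=T_{2s+2}$ and $T_{2s}(1)=T_{2s}$; the numerator coefficients simplify through $x^2=1$ and $x^{s+1}=1$; the correction factor becomes $(x+z)^s=(1+z)^s$ divided by $(1-x^2z)^{s+1}=(1-z)^{s+1}$, with prefactor $xz^2+z^3=z^2+z^3$; and the denominator $1-x^2z-xz^2-z^3$ reduces to $1-z-z^2-z^3$. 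Assembling these pieces reproduces exactly the claimed $U_s(z)$, whence $Q_s(z)=z^{2s+1}U_s(z)$.

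There is essentially no obstacle in this argument, as it is pure substitution; the only point deserving attention is that the evaluation $T_n^{(s)}(1)=T_n(s)$ must hold for every index $n$ so that the generating-function identity transfers without disturbing any coefficient, and this is immediate from (\ref{1.4}). As a final consistency check I would reconcile the formula with the generating function already recorded in (\ref{1.6}): invoking the Tribonacci recurrence $T_{2s+3}=T_{2s+2}+T_{2s+1}+T_{2s}$, the $z^2$-coefficient appearing there, namely $T_{2s+3}-T_{2s+2}-T_{2s+1}-2$, collapses to $T_{2s}-2$, which matches the $z^2$-term of the $U_s(z)$ derived above and confirms that the two expressions agree.
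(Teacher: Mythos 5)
Your proposal is correct and matches the paper's own treatment: the paper proves this corollary simply by remarking that it is the specialization $x=1$ of the preceding theorem on $Q_s(x,z)$, exactly the substitution you carry out (your term-by-term verification and the consistency check against (\ref{1.6}) via the recurrence $T_{2s+3}=T_{2s+2}+T_{2s+1}+T_{2s}$ are just more explicit than the paper's one-line justification).
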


\begin{theorem}
The generating function of the incomplete Tribonacci-Lucas polynomials $%
K_{n}^{\left( s\right) }\left( x\right) $ is given by%
\begin{equation*}
W_{s}\left( x,z\right) =\sum_{n=0}^{\infty }K_{n}^{\left( s\right) }\left(
x\right) z^{n}=z^{-1}Q_{s}\left( x,z\right) +\left( xz+2z^{2}\right)
Q_{s-1}\left( x,z\right) ,
\end{equation*}%
where $s>1$ and $Q_{s}\left( x,z\right) $ is the generating function of
incomplete Tribonacci polynomials$.$
\end{theorem}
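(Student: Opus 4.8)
The plan is to substitute the identity of Proposition 4,
\[
K_n^{(s)}(x)=T_{n+1}^{(s)}(x)+xT_{n-1}^{(s-1)}(x)+2T_{n-2}^{(s-1)}(x),
\]
into the defining series $W_s(x,z)=\sum_{n=0}^{\infty}K_n^{(s)}(x)z^n$ and then to recognize each of the three resulting pieces as a shifted copy of the incomplete Tribonacci generating function $Q_s(x,z)=\sum_{i=0}^{\infty}T_i^{(s)}(x)z^i$ obtained in Theorem 10. The hypothesis $s>1$ guarantees that $Q_{s-1}(x,z)$ is itself a genuine incomplete-Tribonacci generating function, so the right-hand side makes sense.

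First I would break the series into the three sums $\sum_n T_{n+1}^{(s)}(x)z^n$, $x\sum_n T_{n-1}^{(s-1)}(x)z^n$ and $2\sum_n T_{n-2}^{(s-1)}(x)z^n$ and perform the reindexings $m=n+1$, $m=n-1$, $m=n-2$. Using the convention $T_m^{(s)}(x)=0$ for $m<2s+1$ (recorded in the proof of Theorem 10, so in particular $T_m^{(s)}(x)=0$ for $m\le 0$), the first sum collapses to $z^{-1}\sum_{m\ge 1}T_m^{(s)}(x)z^m=z^{-1}Q_s(x,z)$, while the second and third become $xz\,Q_{s-1}(x,z)$ and $2z^2\,Q_{s-1}(x,z)$. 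Collecting the last two yields the factor $(xz+2z^2)Q_{s-1}(x,z)$, which is exactly the asserted closed form.

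The only delicate point, and hence the main obstacle, is justifying that this termwise substitution is legitimate for every $n\ge 0$, since Proposition 4 is stated only for $1\le s\le\lfloor\frac{n-1}{2}\rfloor$ and $n>2$. For $n<2s$ one has $K_n^{(s)}(x)=0$, and I would check that each of $T_{n+1}^{(s)}(x)$, $T_{n-1}^{(s-1)}(x)$, $T_{n-2}^{(s-1)}(x)$ also vanishes under the convention above, so both sides are zero. The genuinely borderline case is the diagonal $n=2s$, where $K_{2s}^{(s)}(x)=K_{2s}(x)$ is the complete polynomial and the three incomplete Tribonacci polynomials on the right saturate to complete ones; there the required statement is the complete analogue $K_n(x)=T_{n+1}(x)+xT_{n-1}(x)+2T_{n-2}(x)$, which I would verify either from the Binet-type formulas in (\ref{1.3}) or, more cheaply, by checking that both sides obey the recurrence $K_{n+3}(x)=x^2K_{n+2}(x)+xK_{n+1}(x)+K_n(x)$ with matching values at $n=0,1,2$. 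Once the substitution is validated across the whole range in this way, the three index shifts assemble the generating function and finish the proof.
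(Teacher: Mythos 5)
Your proposal is correct and follows essentially the same route as the paper: substitute the identity of Proposition 4 into $\sum_{n\ge 0}K_{n}^{(s)}(x)z^{n}$ and reindex the three resulting sums into $z^{-1}Q_{s}(x,z)$ and $(xz+2z^{2})Q_{s-1}(x,z)$. The only difference is that you explicitly verify the boundary cases $n\le 2s$ (where Proposition 4's stated hypotheses do not literally apply), a point the paper's two-line proof passes over in silence; your extra care there is sound and arguably an improvement.
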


\begin{proof}
Let $W_{s}\left( x,z\right) $ be generating function of the incomplete
Tribonacci-Lucas polynomials, that is $W_{s}\left( x,z\right)
=\sum_{n=0}^{\infty }K_{n}^{\left( s\right) }\left( x\right) z^{n}.$

By using Proposition 4, Theorem 10 and the property of sum , we definitely
have%
\begin{eqnarray*}
\sum_{n=0}^{\infty }K_{n}^{\left( s\right) }\left( x\right) z^{n}
&=&\sum_{n=0}^{\infty }\left( T_{n+1}^{\left( s\right) }\left( x\right)
+xT_{n-1}^{\left( s-1\right) }\left( x\right) +2T_{n-2}^{\left( s-1\right)
}\left( x\right) \right) z^{n} \\
&=&z^{-1}Q_{s}\left( x,z\right) +\left( xz+2z^{2}\right) Q_{s-1}\left(
x,z\right) .
\end{eqnarray*}
\end{proof}

For $x=1$ in Theorem 12, we can present the generating function of
incomplete Tribonacci-Lucas numbers.

\begin{corollary}
The generating function of the incomplete Tribonacci-Lucas numbers $%
K_{n}\left( s\right) $ is given by%
\begin{equation*}
W_{s}\left( z\right) =\sum_{n=0}^{\infty }K_{n}\left( s\right)
z^{n}=z^{-1}Q_{s}\left( z\right) +\left( z+2z^{2}\right) Q_{s-1}\left(
z\right) ,
\end{equation*}%
where $s>1$ and $Q_{s}\left( z\right) $ is the generating function of
incomplete Tribonacci numbers$.$
\end{corollary}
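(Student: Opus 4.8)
The plan is to obtain this corollary as the direct specialization $x=1$ of Theorem 12, since by construction the incomplete Tribonacci-Lucas numbers are exactly the values $K_n^{(s)}(1)=K_n(s)$ of the incomplete Tribonacci-Lucas polynomials. First I would record the three facts that make the substitution legitimate: (a) $K_n^{(s)}(1)=K_n(s)$, which is the defining remark following Definition 1; (b) $T_n^{(s)}(1)=T_n(s)$, so that $Q_s(1,z)=\sum_{i\ge 0}T_i^{(s)}(1)z^i=\sum_{i\ge 0}T_i(s)z^i=Q_s(z)$, which is precisely the observation already recorded in Corollary 11; and (c) evaluation at $x=1$ of the coefficient of each power $z^n$ commutes with forming the generating function, because $W_s(x,z)$ is a formal power series in $z$ whose coefficients are polynomials in $x$.

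With these in hand the main step is a one-line substitution. Setting $x=1$ in the identity of Theorem 12,
\[
W_s(x,z)=z^{-1}Q_s(x,z)+(xz+2z^{2})\,Q_{s-1}(x,z),
\]
turns the left-hand side into $\sum_{n\ge 0}K_n^{(s)}(1)z^n=\sum_{n\ge 0}K_n(s)z^n=W_s(z)$ by (a) and (c), turns $Q_s(x,z)$ and $Q_{s-1}(x,z)$ into $Q_s(z)$ and $Q_{s-1}(z)$ by (b), and turns the prefactor $xz+2z^{2}$ into $z+2z^{2}$. This yields
\[
W_s(z)=z^{-1}Q_s(z)+(z+2z^{2})\,Q_{s-1}(z)
\]
in the same range $s>1$, which is exactly the assertion.

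Alternatively, and more self-contained, I would reprove the identity directly at the level of numbers without passing through the polynomial version, mirroring the proof of Theorem 12 but starting from Corollary 5 in place of Proposition 4. Concretely, multiply the relation $K_n(s)=T_{n+1}(s)+T_{n-1}(s-1)+2T_{n-2}(s-1)$ by $z^{n}$, sum over $n\ge 0$, and identify the three resulting series as shifts of $Q_s(z)$ and $Q_{s-1}(z)$: the $T_{n+1}(s)$ term contributes $z^{-1}Q_s(z)$, the $T_{n-1}(s-1)$ term contributes $zQ_{s-1}(z)$, and the $2T_{n-2}(s-1)$ term contributes $2z^{2}Q_{s-1}(z)$. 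I expect no genuine obstacle here; the only point requiring a word of care is the index bookkeeping at the low end ($n=0,1,2$), where the shifted Tribonacci terms $T_{n-1}(s-1)$ and $T_{n-2}(s-1)$ have nonpositive subscripts and must be taken to vanish, consistent with the conventions already used in Theorem 10. Once that boundary accounting is checked the two routes agree and the corollary follows.
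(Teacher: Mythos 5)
Your primary route (specializing Theorem 12 at $x=1$, using $K_n^{(s)}(1)=K_n(s)$ and $Q_s(1,z)=Q_s(z)$) is exactly how the paper obtains this corollary, which it states without further proof as the $x=1$ case of Theorem 12; your justification of why the substitution is legitimate is correct and, if anything, more careful than the paper's. The alternative self-contained argument via Corollary 5 is also sound and mirrors the proof of Theorem 12, but it is not needed.
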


\end{document}